\newtheorem{theorem}{Theorem}[section]
\newtheorem{proposition}[theorem]{Proposition}
\newtheorem{corollary}[theorem]{Corollary}
\newtheorem{lemma}[theorem]{Lemma}
\newtheorem{example}[theorem]{Example}
\newtheorem{question}[theorem]{Question}
\numberwithin{equation}{section}
\def\IC{{\bf C}}
\def\b0{{\bf 0}}
\def\u{{\bf u}}
\def\x{{\bf x}}
\def\y{{\bf y}}
\def\cN{{\mathcal N}}
\def\cV{{\mathcal V}}
\def\conv{{\textrm{conv}}\,}
\def\tr{{\rm tr}\,}
\def\({\left(}
\def\){\right)}
\def\[{\left[}
\def\]{\right]}
\def\tr{{\rm tr}}
\def\qed{\hfill\vbox{\hrule width 6 pt
\hbox{\vrule height 6 pt width 6 pt}}}
\newcommand{\glim}{\operatorname{glim}\,}
\begin{document}
%\openup 1\jot
%\maketitle{}

\title[Spectrum and numerical ranges]{The spectrum of the product of operators,\\
and the product of their numerical ranges}

\author{%David Li-Wei Kuo,
Chi-Kwong Li,  Ming-Cheng Tsai,\\ Kuo-Zhong Wang and Ngai-Ching Wong
}

\address[Li]{Department of Mathematics, College of William \& Mary, Williamsburg, VA 23187, USA.}
\email[Li]{ckli@math.wm.edu}

\address[%Kuo,
Tsai and Wong]{Department of Applied Mathematics, National Sun Yat-sen University,
Kaohsiung 80424, Taiwan.}
%\email[Kuo]{mpu.verilog@gmail.com}
\email[Tsai]{mctsai2@gmail.com}
\email[Wong]{wong@math.nsysu.edu.tw}

\address[Wang]{Department of Applied Mathematics, National Chiao Tung University, Hsinchu 30010, Taiwan.}
\email[Wang]{kzwang@math.nctu.edu.tw}
\keywords{Numerical range, spectrum, positive operators}

\subjclass{47A10, 47A12, 15A60}
\date{\today}

\dedicatory{This paper is dedicated to Professor  Pei Yuan Wu}

\maketitle

\begin{abstract}
We show that a compact operator $A$ is a multiple of a positive semi-definite operator
if and only if
$$
\sigma(AB) \subseteq \overline{W(A)W(B)}, \quad\text{for all (rank one) operators $B$}.
$$
An example of a normal operator is given to show
that the equivalence conditions may fail in general.
We then obtain conditions to identify other classes of operators $A$ so that
equivalence conditions hold.
\end{abstract}

\maketitle

\section{Introduction}

Let $B(H)$ be the algebra of bounded linear operators on a complex Hilbert space $H$.
We identify $B(H)$ with
$M_n$, the algebra of $n\times n$ complex matrices, if $H$ has finite dimension $n$.
The {\em spectrum}  $\sigma(A)$,
%the {\em spectral radius} $r(A)$,
and the {\em numerical range} $W(A)$
%and the {\em numerical radius} $w(A)$
of an operator $A\in B(H)$ are defined by
$$
\sigma(A) = \{\lambda:  A-\lambda I   \text{ is not invertible}\}, \quad \text{ and } \quad
%r(A) &= \max\{|z|:z\in\sigma(A)\},\\
W(A) = \{\langle Ax, x\rangle : x\in H, \|x\|=1\},$$
%w(A)& = \sup \{|z| : z\in W(A)\}, \end{align*}
respectively.
Here, $\langle\cdot,\cdot\rangle$ and $\|\cdot\|$ denote the inner product and its
corresponding norm of $H$.

The Hausdorff--Toeplitz theorem asserts that $W(A)$  is always a
bounded convex subset of the complex plane.  When $H$ is finite dimensional, it is compact.
In general, the closure of the numerical range satisfies
$$
\sigma(A) \subseteq \overline{W(A)}.
$$
When $A$ is normal, we have
$$
\conv\sigma(A) = \overline{W(A)}.
$$
Here, $\conv S$ denotes the convex hull of a set $S$ in a vector space.
The spectrum and the numerical range are useful tools for studying operators and
matrices.
Motivated by the theoretical development and applications,
researchers have obtained many interesting results;
see, for example,
\cite{G97}, \cite[Chapter 22]{H82} or \cite[Chapter 1]{H91}.

In perturbation theory, one might want to estimate $\sigma(A+B)$ for
``small'' $B$, but it is known that
$\sigma(A+B) \not \subseteq \sigma(A) + \sigma(B)$.
For example, let $A = \begin{pmatrix} 0 & M \cr 0 & 0 \cr\end{pmatrix}$ and
$B = \begin{pmatrix} 0 & 0 \cr \varepsilon & 0 \cr\end{pmatrix}$
with positive $M, \varepsilon > 0$.
Then $\sigma(A) = \sigma(B)= \{0\}$, whereas
$\sigma(A+B) = \{ \pm \sqrt{M\varepsilon}\}$.
Nevertheless, we always have
$$\sigma(A+B) \subseteq W(A+B) \subseteq W(A) + W(B).$$
Thus, $W(A)+W(B)$ provides a containment region for $\sigma(A+B)$.

In (multiplicative) perturbation theory, one considers $\tilde{A}=AB$ such that $B$
is closed to the identity operator $I$.  However, %for (composition) product of operators,
neither of the inclusion
$\sigma(AB) \subseteq \sigma(A)\sigma(B)$ nor $W(AB) \subseteq W(A)W(B)$ holds.
The following example in \cite{HO88} tells us that the above inclusions may not hold even for $2\times 2$ hermitian matrices $A, B$.
Let
$$
A=\left(
         \begin{array}{cc}
           1 & 0 \\
           0 & -1 \\
         \end{array}
       \right)
\quad\text{and}\quad
B= \left(
     \begin{array}{cc}
       0 & 1 \\
       1 & 0 \\
     \end{array}
   \right).
$$
Then
$$
\sigma(AB) = \{\pm\sqrt{-1}\}\not\subseteq W(A)W(B) = [-1,1].
$$
Nevertheless, it was shown in \cite{W67}
that if $A,B\in B(H)$ and $0\not\in\overline{W(A)}$, then
$$
\sigma(A^{-1}B)\subseteq {\overline{W(B)}} / {\overline{W(A)}}.
$$
It follows from this result that if $A \in M_n$
is a multiple of a positive semi-definite matrix,
$$
\sigma(AB)\subseteq W(A)W(B) \quad\hbox{ for all } B \in M_n.
$$
In \cite[Theorem 3]{B71}, it was shown that if $A\in B(H)$ is a (multiple of a) positive
semi-definite operator, then
$$
\conv\sigma(AB)\subseteq {\overline{W(A)}}{\overline{W(B)}} \quad\hbox{ for all } B\in  B(H).
$$
%Here,  $\conv S$ denotes the convex hull of the set $S$ in the complex plane $\mathbb C$.

It is natural to consider the converse problem; namely,
\begin{question} \label{question}
Is $A\in B(H)$ a multiple of a positive semi-definite matrix if
$$
\sigma(AB)\subseteq \overline{W(A)}\overline{W(B)} \quad \hbox{\rm for all } B\in B(H)\,?
$$
\end{question}
\noindent
In \cite{A12}, the author considered this question for matrices $A \in M_n$,
and an affirmative answer to this question was claimed in the paper.
In \cite{C13}, the authors there pointed out a gap in the proof in
\cite[Theorem 2.4]{A12}, and repaired it.
In \cite{Cet}, the authors there studied inequalities
related to the spectral radius and the numerical radius of products of matrices.
In the last section of \cite{Cet}, they
pointed out another problem in \cite{A12},
and fixed it using the results in their paper.

In this paper, we consider Question \ref{question} for
infinite dimensional operators.
In particular, we refine the finite dimensional result
to the following.

\begin{theorem}\label{thm2}
Suppose $A \in B(H)$ is a compact operator.
The following conditions are equivalent.
\begin{enumerate}
\item[\text{{\bf (A1)}}]
 $A$ is a multiple of a positive semi-definite operator.

\item[\text{{\bf (A2)}}]
$\sigma(AB) \subseteq \overline{W(A)W(B)} \quad\text{ for all } B \in B(H)$.

\item[\text{{\bf (A3)}}]
$\sigma(AB) \subseteq \overline{W(A)W(B)} \quad\text{ for all rank one } B \in B(H)$.
\end{enumerate}
\end{theorem}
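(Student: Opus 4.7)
My strategy proceeds implication by implication. For (A1)~$\Rightarrow$~(A2) I invoke Bonsall's result \cite{B71} cited in the excerpt: if $A$ is a scalar multiple of a positive semi-definite operator, then $\conv\sigma(AB)\subseteq\overline{W(A)}\cdot\overline{W(B)}$ for every $B\in B(H)$. Since $\overline{W(A)}$ and $\overline{W(B)}$ are compact subsets of the complex plane, their product coincides with $\overline{W(A)W(B)}$, and (A2) follows. The implication (A2)~$\Rightarrow$~(A3) is trivial.

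The substantive direction is (A3)~$\Rightarrow$~(A1). The first task is to unpack (A3) pointwise for rank-one $B$. Write $B=x\otimes y$ with unit vectors $x,y$, so that $Bz=\langle z,y\rangle x$. Then $AB$ has rank at most one with trace $\langle Ax,y\rangle$, whence $\sigma(AB)\setminus\{0\}\subseteq\{\langle Ax,y\rangle\}$, and the elliptical range theorem identifies $\overline{W(B)}$ as the closed elliptic disk with foci $\{0,\langle x,y\rangle\}$, semi-major axis $1/2$, and numerical radius $w(B)=(1+|\langle x,y\rangle|)/2$. Condition (A3) thus reduces to the pointwise inclusion $\langle Ax,y\rangle\in\overline{W(A)\,W(x\otimes y)}$ for all unit $x,y$. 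Assuming $A\neq 0$, compactness of $A$ provides unit vectors $u,v$ attaining the top singular value: $A^*Au=\|A\|^2 u$ and $Au=\|A\|\,v$. Applying (A3) to $B_0=u\otimes v$ forces $\|A\|\in\overline{W(A)W(B_0)}$, and since $w(A)\le\|A\|$ while $w(B_0)\le 1$, the product bound $\|A\|\le w(A)w(B_0)\le\|A\|$ yields the equalities $w(A)=\|A\|$ and $|\langle u,v\rangle|=1$. The latter gives $v=e^{i\alpha}u$ for some real $\alpha$, so $u$ is an eigenvector of $A$ with eigenvalue $\lambda_1=\|A\|\,e^{i\alpha}$. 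Combining $A^*Au=\|A\|^2u$ with $Au=\lambda_1u$ gives $A^*u=\overline{\lambda_1}u$, so $u$ is a joint eigenvector of $A$ and $A^*$, and $u^\perp$ is invariant under both.

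The remainder is an iteration: apply the same analysis to $A|_{u^\perp}$, which is again compact, to extract the next joint eigenvector $u_2$ with eigenvalue $\lambda_2=s_2\,e^{i\alpha_2}$, where $s_2=\|A|_{u^\perp}\|$, and continue. The main obstacle I anticipate is that (A3) for $A$ on $H$ does not descend cleanly to (A3) for $A|_{u^\perp}$ on $u^\perp$: for a rank-one $B'$ on $u^\perp$ extended by zero to $H$, the set $\overline{W(A)W(B)}$ is in general strictly larger than $\overline{W(A|_{u^\perp})W(B')}$, picking up contributions from $\lambda_1$. To overcome this I plan to test (A3) on $A$ itself with carefully chosen rank-one operators $B=x\otimes y$ in which $x,y$ mix the known eigenvector $u$ with candidates from $u^\perp$, and to use the explicit value of $\lambda_1$ to pin down both the modulus and the phase of $\lambda_2$, forcing $\alpha_2=\alpha_1$. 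Iterating through the singular-value decomposition of the compact operator $A$ and invoking the spectral theorem for the resulting compact normal operator then gives $A=e^{i\alpha}|A|$, completing the proof. The delicate step is this phase alignment at each stage, where the full geometry of the elliptic disks $W(x\otimes y)$ relative to $\overline{W(A)}$ must be exploited, not merely the coarse modulus inequality.
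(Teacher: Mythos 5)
Your treatment of (A1)$\Rightarrow$(A2)$\Rightarrow$(A3) is fine, and the first stage of (A3)$\Rightarrow$(A1) is correct and essentially reproduces Lemma 2.3 of the paper: testing with $B_0=u\otimes v$ built from a norm-attaining singular pair, using $w(B_0)=(1+|\langle u,v\rangle|)/2$, and squeezing $\|A\|\le w(A)w(B_0)\le\|A\|$ does force $w(A)=\|A\|$, $v=e^{i\alpha}u$, and hence a reducing eigenvector $u$ with eigenvalue $\lambda_1=\|A\|e^{i\alpha}$. After normalizing you have $A=I\oplus A_1$, and everything up to that point is sound.

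But the heart of the theorem --- showing that $A_1$ is positive semi-definite --- is not proved; it is only announced as a plan. Your proposed iteration through the singular value decomposition collapses at the second step: testing $B=u_2\otimes v_2$ for the next singular pair only gives $s_2\le w(A)w(B)=\|A\|\,(1+|\langle u_2,v_2\rangle|)/2$, which is vacuous once $s_2<\|A\|$, so nothing forces $|\langle u_2,v_2\rangle|=1$, let alone the phase alignment $\alpha_2=\alpha_1$. You correctly identify that one must instead test $A$ with rank-one operators mixing $u$ with vectors from $u^\perp$, but that construction and the accompanying estimate \emph{are} the proof, and you do not supply them. The paper's route is: since $A$ is compact and $1\notin\sigma(A_1)$, the top eigenvalue of $(A_1+A_1^*)/2$ is strictly less than $1$, so the point $1$ lies on two distinct support lines of $\overline{W(A)}$; then (Proposition 2.2) if $W(A_1)\not\subseteq[0,1]$ one picks a point $1+r_1e^{i\alpha_1}\in W(A_1)$ off the ray, conjugates by an explicit $2\times 2$ unitary, and tests with a specific rank-one $B_0=2e^{i(\pi/2-\alpha_1)}\bigl(\begin{smallmatrix}\cos\theta&0\\ \sin\theta&0\end{smallmatrix}\bigr)\oplus O$ whose parameter $\theta$ is tuned so that the nonzero eigenvalue of $AB$ has real part strictly larger than the real part of every point of $W(zB)$ for every $z\in W(A)$ --- a support-line computation that yields the contradiction. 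Until you exhibit such test operators and carry out the corresponding estimate, the implication (A3)$\Rightarrow$(A1) remains unestablished.
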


Of course, it would be nice to further extend the result to general operators.
However, the following example shows that it is impossible even on a separable Hilbert
space.
(A verification of the example will be given in the next section).

\begin{example}\label{eg}
Consider a Hilbert space $H$ with a countable orthonormal basis
$\{f_1, f_2, \dots \}$. Suppose $\{\mu_1, \mu_2, \dots\}$
is a dense set of the unit circle $\{e^{it}: t \in [0, 2\pi)\}$
and  $T$  is the diagonal operator on $H$ satisfying
$Tf_n = \mu_n f_n$. Then  $A := I+T$, which is not a multiple of positive semi-definite
operator, satisfies
$$
\sigma(AB) \subseteq \overline{W(A)W(B)}, \quad \hbox{ for all } B \in B(H).
$$
\end{example}

A key step in the proof of the finite dimensional result is to show that
if $A \in M_n$ satisfies condition $({\bf A2})$ in Theorem \ref{thm2}, then $A$ is normal.
We can modify Example \ref{eg} to show that this implication is not true
for general operators in the following.
(The verification will also be done in the next section.)

\begin{example} \label{eg2}
Suppose $\hat A = A \oplus \begin{pmatrix} 1 & d \cr 0 & 1 \cr \end{pmatrix}
\in B(H \oplus \IC^2)$,
where $A$ is defined as in Example \ref{eg} and $d \in (0, 1]$.
Then $\hat A$ is not normal and condition {\bf (A2)} in Theorem \ref{thm2} holds.
\end{example}

Apart from Examples \ref{eg}  and \ref{eg2}, we obtain the following
theorem, which allows us to identify other classes of operators $A$
such that the conditions {\bf (A1), (A2), (A3)} are equivalent.

\begin{theorem}\label{thm}
Let $H$ be a Hilbert space of finite or infinite dimension.
Consider the following conditions for an operator $A \in B(H)$.
\begin{itemize}
\item[\text{{\bf (A1)}}] $A$ is a multiple of a positive (semi-definite) operator.

\item[\text{{\bf (A2)}}]  $\sigma(AB) \subseteq \overline{W(A)W(B)}$ for all $B \in B(H)$.

\item[\text{{\bf (A3)}}]  $\sigma(AB) \subseteq \overline{W(A)W(B)}$ for all rank one
$B \in B(H)$.
\end{itemize}
\noindent
Then the following implications hold:
$$\text{{\bf (A1)}} \Rightarrow \text{{\bf (A2)}} \Rightarrow \text{{\bf (A3)}}.$$
The implication
$$
\text{{\bf (A3)}} \Rightarrow \text{{\bf (A1)}}
$$
holds when there is a boundary point $\mu$  of  $\overline{W(A)}$ attaining the numerical radius $|\mu|=w(A)$
and lying on two different support lines of $\overline{W(A)}$.
\end{theorem}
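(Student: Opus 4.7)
The first two implications are standard. For $\text{{\bf (A1)}} \Rightarrow \text{{\bf (A2)}}$, write $A = cP$ with $c \in \mathbb{C}$ and $P = P^* \geq 0$. Since $\sigma(AB)\setminus\{0\} = c\cdot\sigma(P^{1/2}BP^{1/2})\setminus\{0\}$, and the substitution $y = P^{1/2}x/\|P^{1/2}x\|$ shows $W(P^{1/2}BP^{1/2}) \subseteq W(B)\cdot W(P)$, it follows that $\sigma(AB)\subseteq c\cdot\overline{W(B)W(P)} = \overline{W(A)W(B)}$. The eigenvalue $0$ poses no issue because every rank-deficient $B$ has $0 \in W(B)$, and non-invertibility of $P$ is handled by the approximation $P + \varepsilon I$. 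The implication $\text{{\bf (A2)}} \Rightarrow \text{{\bf (A3)}}$ is trivial.

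For $\text{{\bf (A3)}} \Rightarrow \text{{\bf (A1)}}$ under the corner hypothesis, my plan is to isolate the eigenspace at $\mu$ and then probe the complementary piece of $A$ with rank-one operators. Because $\mu$ lies on two distinct support lines of $\overline{W(A)}$, a classical theorem of Donoghue on corner points of the numerical range shows that $\mu$ is an eigenvalue of $A$ whose eigenspace $E := \ker(A - \mu I)$ reduces $A$, yielding the orthogonal decomposition $A = \mu I_E \oplus A_1$ on $H = E \oplus E^\perp$ with $\mu \notin \overline{W(A_1)}$. Since conditions $\text{{\bf (A1)}}$--$\text{{\bf (A3)}}$ and the corner hypothesis are all invariant under multiplication of $A$ by a unimodular scalar, I may assume $\mu > 0$, so $\mu = w(A)$.

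It then suffices to show $A_1 \geq 0$ with $\|A_1\| \leq \mu$, for then $A = \mu(I_E \oplus A_1/\mu)$ is manifestly a positive multiple of a positive contraction. The workhorse test, for unit vectors $e \in E$ and $f \in E^\perp$, is the rank-one nilpotent $B := \tfrac{1}{2}(e+f)(e-f)^*$: a direct computation gives $\sigma(AB) = \{0, (\mu - \lambda)/2\}$ with $\lambda := \langle A_1 f, f\rangle$, while $W(B) = \overline{D(0, 1/2)}$ and $W(A) \subseteq \overline{D(0, \mu)}$ force $\overline{W(A)W(B)} \subseteq \overline{D(0, \mu/2)}$. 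Condition $\text{{\bf (A3)}}$ now yields $|\mu - \lambda| \leq \mu$, equivalently $\operatorname{Re}\lambda \geq |\lambda|^2/(2\mu) \geq 0$. A companion test using $B = \tfrac{1}{2}(f_1 + f_2)(f_1 - f_2)^*$ for orthonormal $f_1, f_2 \in E^\perp$ gives the off-diagonal bound $|\langle A_1 f_1, f_2\rangle| \leq \mu/2$.

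The main obstacle is to show $\operatorname{Im}\lambda = 0$; the disk constraint $\lambda \in \overline{D(\mu,\mu)}$ alone does not suffice. This is exactly where the second support line is essential: it provides an operator inequality $\operatorname{Re}(e^{-i\theta}(A - \mu I)) \leq 0$ for some real $\theta$ with $\sin\theta \neq 0$, confining $\overline{W(A_1)}$ to a half-plane tilted from the vertical. The plan is to intersect this tilted half-plane with $\overline{D(\mu, \mu)} \cap \overline{D(0, \mu)}$ and sharpen the bound with further $\text{{\bf (A3)}}$-tests using complex phases, for instance $B = \tfrac{1}{2}(e + f)(e - e^{i\gamma}f)^*$, whose $AB$-eigenvalue is $(\mu - e^{-i\gamma}\lambda)/2$ and whose $W(B)$ is a rotated elliptical disk. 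Combined with the off-diagonal bound, these constraints should collapse $\overline{W(A_1)}$ onto the real segment $[0, \mu]$, whereupon $A_1 = A_1^*$ with $0 \leq A_1 \leq \mu I$, completing the proof.
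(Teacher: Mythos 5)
Your treatment of {\bf (A1)} $\Rightarrow$ {\bf (A2)} $\Rightarrow$ {\bf (A3)} is essentially the paper's argument (via $P^{1/2}BP^{1/2}$), modulo a loose statement: if $0\in\sigma(AB)$ you only get $0\in\sigma(A)\subseteq\overline{W(A)}$ or $0\in\sigma(B)\subseteq\overline{W(B)}$, not $0\in W(B)$; this is easily repaired. The problems are in {\bf (A3)} $\Rightarrow$ {\bf (A1)}. First, your opening reduction is not justified in infinite dimensions: Donoghue's corner theorem requires the corner $\mu$ to belong to $W(A)$ itself, whereas the hypothesis only puts $\mu$ on the boundary of $\overline{W(A)}$, which need not be attained. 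The paper gets around this by first proving from {\bf (A3)} (via a rank-one test) that $r(A)=w(A)=\|A\|$, so that a vector nearly attaining the numerical radius nearly attains the norm, and then passing to the Berberian extension $\tilde A\in B(K)$ to convert approximate eigenvectors into genuine ones. That passage itself requires a nontrivial lemma (the paper's Lemma 2.4): rank-one operators on $K$ need not come from rank-one operators on $H$, so one must check that {\bf (A3)} survives the extension. None of this appears in your sketch, and without it the decomposition $A=\mu I_E\oplus A_1$ is unavailable.

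Second, and more seriously, the decisive step is missing. Your rank-one tests correctly yield $W(A_1)\subseteq\overline{D(\mu,\mu)}\cap\overline{D(0,\mu)}$ (this is exactly the paper's Lemma 2.3(2)), but that region is two-dimensional, and Example 1.3 of the paper (where $\overline{W(A)}$ is a disk and the conclusion fails) shows that no amount of such ``universal'' disk bounds can finish the job: the second support line must enter quantitatively. At precisely this point you write that further tests with $B=\tfrac12(e+f)(e-e^{i\gamma}f)^*$ ``should collapse'' $\overline{W(A_1)}$ onto $[0,\mu]$. The resulting constraint is $\tfrac12(\mu-e^{-i\gamma}\lambda)\in\overline{W(A)W(B_\gamma)}$, where $W(A)W(B_\gamma)$ is a union of rotated and scaled elliptical disks over all $z\in W(A)$; you give no argument that these constraints exclude non-real $\lambda$, and it is not evident that they do. The paper's proof instead builds a single carefully tuned rank-one operator $B_0=2e^{i(\pi/2-\alpha_1)}\bigl(\begin{smallmatrix}\cos\theta&0\\ \sin\theta&0\end{smallmatrix}\bigr)$, with the phase determined by the angle $\alpha_1$ of the second support line and $\theta$ chosen so that $2r_1\sin\alpha_1\ge\tan\theta$, and then bounds the rightmost support line of $W(zB)$ uniformly over $z\in W(A)$ to show the trace eigenvalue of $UAU^*B$ falls outside $W(A)W(B)$. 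That uniform support-line estimate is the heart of the theorem, and your proposal does not contain a substitute for it.
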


By Theorem \ref{thm}, we have the following.
\iffalse
In \cite{A12}, it is shown that every finite square matrix $A$ satisfying \textbf{(A3)} is normal.
But we do not know whether  it holds for infinite dimensional operators.
Assuming that $A$ is normal to start with, however, the following results will look better.
\fi

\begin{corollary}\label{cor}
In each of the following cases, conditions {\bf (A1), (A2), (A3)} in Theorem \ref{thm} are equivalent
for an operator $A \in B(H)$.
\begin{enumerate}
\item[{\rm (1)}]
${\overline{W(A)}}$ is a convex polygon, which may degenerate  to a line segment or
a point. This covers the cases when $A$ is a scalar multiple of a hermitian operator,
or when $A$ is a normal operator with finite spectrum.
\item[{\rm (2)}] $A\in B(H)$ is normal and   there is an isolated point
$\lambda$ in $\sigma(A)$ attaining the spectral radius $|\lambda|=r(A)$.
\end{enumerate}
\end{corollary}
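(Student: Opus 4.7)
The plan is, in each case, to verify the hypothesis of the implication $\text{{\bf (A3)}}\Rightarrow\text{{\bf (A1)}}$ in Theorem~\ref{thm}: to exhibit a point $\mu\in\partial\overline{W(A)}$ with $|\mu|=w(A)$ lying on two distinct support lines of $\overline{W(A)}$. Combined with $\text{{\bf (A1)}}\Rightarrow\text{{\bf (A2)}}\Rightarrow\text{{\bf (A3)}}$ from Theorem~\ref{thm}, this yields the asserted equivalence.

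\smallskip

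For part (1), suppose $\overline{W(A)}$ is a convex polygon (possibly a segment or a point). Since $z\mapsto|z|^2$ is strictly convex, its maximum on $\overline{W(A)}$ is attained at an extreme point: a vertex for a genuine polygon, an endpoint for a segment, or the sole point otherwise. Take $\mu$ to be this extreme point. At a genuine polygon vertex the two incident edges lie on two distinct support lines; at a segment endpoint any pair of non-collinear lines through $\mu$ missing the open segment suffices; at a single point any two distinct lines through $\mu$ work. For the sub-cases: if $A=\alpha H$ with $H=H^*$ then $\overline{W(A)}=\alpha\overline{W(H)}$ is a segment, and if $A$ is normal with finite spectrum then $\overline{W(A)}=\conv\sigma(A)$ is a polygon.

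\smallskip

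For part (2), let $A$ be normal with an isolated $\lambda\in\sigma(A)$ satisfying $|\lambda|=r(A)$. Normality gives $\overline{W(A)}=\overline{\conv\sigma(A)}$ and $w(A)=r(A)$, so $\mu:=\lambda$ satisfies $|\mu|=w(A)$ and lies on $\partial\overline{W(A)}$ as well as on the boundary of the disk $D:=\{z:|z|\le r(A)\}\supseteq\overline{W(A)}$. The tangent $\ell_1$ to $D$ at $\lambda$ supports $\overline{W(A)}$ at $\lambda$. For a second support line, set $K:=\conv(\sigma(A)\setminus\{\lambda\})$; isolation of $\lambda$ makes $\sigma(A)\setminus\{\lambda\}$ compact, so $K$ is a compact convex subset of $D$ (and $K=\emptyset$ forces $A=\lambda I$, a trivial case). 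A short argument shows $\lambda\notin K$: any Carath\'eodory decomposition $\lambda=\sum_{i=1}^{3}t_iz_i$ with $z_i\in\sigma(A)\setminus\{\lambda\}$, $t_i\ge 0$, $\sum t_i=1$ yields $r(A)=|\lambda|\le\sum t_i|z_i|\le r(A)$, forcing each $z_i$ with $t_i>0$ to equal $\lambda$, a contradiction. Since $\ell_1\cap D=\{\lambda\}$ and $\lambda\notin K\subseteq D$, the set $K$ is strictly on one side of $\ell_1$ at positive distance from it; small rotations of $\ell_1$ around $\lambda$ therefore remain support lines of $\overline{W(A)}$, and any such rotation gives $\ell_2\ne\ell_1$.

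\smallskip

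The main obstacle is securing $\ell_2\ne\ell_1$ in part (2); this depends on the isolation of $\lambda$, which forces $d(\lambda,K)>0$ and hence a positive-angle cone of admissible rotations of $\ell_1$. Without isolation, the spectrum can accumulate toward $\lambda$ along the boundary circle, allowing $\overline{W(A)}$ to osculate $D$ at $\lambda$ and leaving the tangent as the unique support line---precisely the obstruction in Example~\ref{eg}.
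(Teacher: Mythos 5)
Your proof is correct and follows essentially the same route as the paper: in each case you verify the hypothesis of the implication \textbf{(A3)} $\Rightarrow$ \textbf{(A1)} in Theorem \ref{thm} by producing a norm-attaining boundary point of $\overline{W(A)}$ on two distinct support lines, which is exactly what the paper does (it dismisses (1) as clear and, for (2), separates $\lambda$ from $\sigma(A)\setminus\{\lambda\}$ by a line). Your write-up merely supplies the details the paper leaves implicit, e.g.\ the triangle-inequality argument showing $\lambda\notin\conv(\sigma(A)\setminus\{\lambda\})$.
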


\section{Proofs and auxiliary results}

We focus on the proof of Theorem \ref{thm}, and deduce Theorem \ref{thm2} and Corollary
\ref{cor} as consequences.  We then verify
Examples \ref{eg} and \ref{eg2}.

The implication {\bf (A1)} $\Rightarrow$ {\bf (A2)} in Theorem \ref{thm}
is a result in \cite{B71}. Furthermore, it was shown that if {\bf (A1)} holds,
then $\overline{W(A)W(B)}$ is always convex.  We give a short proof of the result.

\begin{proposition}\label{new-1}
Suppose $A \in B(H)$ is a multiple of a positive semi-definite operator.
Then for any $B \in B(H)$, the set $\overline{W(A)W(B)}$ is convex, and
$$\sigma(AB) \subseteq  \overline{W(A)W(B)}.$$
\end{proposition}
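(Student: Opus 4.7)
The plan is to reduce immediately to the case $A \geq 0$ (the scalar multiple contributes only a rotation/dilation, which commutes with closure, convex combinations, and the formation of $W(\cdot)$ and $\sigma(\cdot AB)$). So I assume $A$ is positive semi-definite with $\overline{W(A)} = [m, M]$, where $0 \leq m \leq M = \|A\|$.

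For convexity, I would first observe that since $W(A)$ and $W(B)$ are bounded, the product map is continuous on the compact set $\overline{W(A)}\times\overline{W(B)}$, giving $\overline{W(A)W(B)} = \overline{W(A)} \cdot \overline{W(B)} = [m,M]\cdot\overline{W(B)}$. To show this last set is convex, take two points $s_1 w_1$ and $s_2 w_2$ with $s_i \in [m,M]$ and $w_i \in \overline{W(B)}$, and a convex combination $\lambda s_1 w_1 + (1-\lambda) s_2 w_2$. Setting $s := \lambda s_1 + (1-\lambda)s_2 \in [m,M]$, the combination equals $s\cdot w$ where $w := (\lambda s_1/s) w_1 + ((1-\lambda)s_2/s) w_2$ is a genuine convex combination of $w_1,w_2 \in \overline{W(B)}$ (in the degenerate case $s=0$, necessarily $m=0$ and the combination itself is $0$, which lies in the set). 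This is the only place the nonnegativity of $[m,M]$ is used.

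For the spectral inclusion, I would exploit the standard identity $\sigma(AB)\cup\{0\} = \sigma(A^{1/2} B A^{1/2})\cup\{0\}$, valid for any $A\geq 0$. The key computation is that for any unit vector $x$,
$$
\langle A^{1/2} B A^{1/2} x, x\rangle \;=\; \langle B(A^{1/2}x), A^{1/2}x\rangle \;=\; \|A^{1/2}x\|^{2}\,\langle By, y\rangle,
$$
where $y := A^{1/2}x/\|A^{1/2}x\|$ is a unit vector (when $A^{1/2}x\neq 0$). Since $\|A^{1/2}x\|^{2} = \langle Ax,x\rangle \in W(A)$ and $\langle By,y\rangle\in W(B)$, this shows $W(A^{1/2}BA^{1/2})\subseteq W(A)\,W(B)$; the exceptional case $A^{1/2}x=0$ only produces the value $0$, which lies in $W(A)W(B)$ since $0 \in \overline{W(A)}$ in that situation. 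Hence
$$
\sigma(AB)\setminus\{0\} \;\subseteq\; \sigma(A^{1/2}BA^{1/2}) \;\subseteq\; \overline{W(A^{1/2}BA^{1/2})} \;\subseteq\; \overline{W(A)W(B)}.
$$

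It remains to check that $0 \in \overline{W(A)W(B)}$ whenever $0\in\sigma(AB)$. If $A$ is not invertible then $0 \in \sigma(A)\subseteq \overline{W(A)}$; otherwise $AB$ noninvertible forces $B$ noninvertible and hence $0 \in \sigma(B)\subseteq \overline{W(B)}$. In either case $0 \in \overline{W(A)}\cdot\overline{W(B)} = \overline{W(A)W(B)}$, completing the proof. I do not expect a serious obstacle here; the main subtlety is simply keeping track of how $0$ enters both in the spectrum/range identities and in the product of numerical ranges, which the positivity of $A$ handles cleanly.
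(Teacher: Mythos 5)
Your proof is correct and follows essentially the same route as the paper: reduce to $A\ge 0$, handle $\lambda=0$ via singularity of $A$ or $B$, and use $\sigma(AB)\setminus\{0\}=\sigma(A^{1/2}BA^{1/2})\setminus\{0\}$ together with the identity $\langle A^{1/2}BA^{1/2}x,x\rangle=\langle Ax,x\rangle\langle By,y\rangle$. The only differences are cosmetic: you spell out the convexity of $[m,M]\cdot\overline{W(B)}$ (which the paper merely asserts) and you invoke the standard inclusion $\sigma(T)\subseteq\overline{W(T)}$ for $T=A^{1/2}BA^{1/2}$ in one stroke, where the paper re-derives it by splitting into the approximate-eigenvalue and non-dense-range cases.
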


\it Proof. \rm
Without loss of generality, we can assume
that $A$ is positive semi-definite.
Note that  $\overline{W(A)} = [ a_1, a_2]$, $a_2 \ge a_1 \ge 0$,
and $\overline{W(B)}$ is a compact convex set. Thus,
$$\overline{W(A)W(B)} = \bigcup_{\mu \in \overline{W(A)}} \mu \overline{W(B)}
= \bigcup_{a_1 \le t \le a_2} t\overline {W(B)}$$
is convex.

Now, suppose $\lambda\in \sigma(AB)$. If $\lambda = 0$, then
$AB$ is singular, so that $A$ is singular or $B$ is singular.
Hence, $0 \in \sigma(A) \in \overline{W(A)}$ or
$0 \in \sigma(B) \in \overline{W(B)}$ so that
$0 \in \overline{W(A)W(B)}$.

If $\lambda \ne 0$, then
$\lambda \in \sigma(AB)\setminus\{0\} = \sigma(A^{1/2}BA^{1/2})\setminus\{0\}$.

Assume first that $\lambda$ is an   approximate eigenvalue of $A^{1/2}BA^{1/2}$.
Then there is a sequence of unit vectors $\{x_n\}$ such that
$$\|A^{1/2}BA^{1/2} x_n - \lambda x_n\| \rightarrow 0.$$
We may assume that $A^{1/2} x_n \ne 0$ for all $n \in \mathbb N$.
Thus $t_n\langle By_n, y_n\rangle \rightarrow \lambda$ in $\overline{W(A)W(B)}$
with

\medskip
\centerline{$y_n = A^{1/2}x_n/\|A^{1/2}x_n\|$ and $t_n = \langle Ax_n, x_n\rangle \in W(A)$.}

Assume next that $\lambda$ is not in the approximate point spectrum, and thus
$A^{1/2}BA^{1/2} - \lambda$ does not have a dense range.
Consequently, we can find a norm one element $y$ in $H$ orthogonal to its range.
In particular,
$$
\langle (A^{1/2}BA^{1/2} - \lambda)y, y\rangle = 0.
$$
This gives
$$
\lambda = \langle A^{1/2}BA^{1/2}y, y\rangle = t\langle Bx,x\rangle,
$$
and $A^{1/2}y\neq 0$.
Here, $t=\|A^{1/2}y\|^2 = \langle Ay, y\rangle \in W(A)$, and
$x = A^{1/2}y/\|A^{1/2}y\|$ is of norm one.  Hence, $\lambda\in W(A)W(B)$.
\qed

The implication ({\bf A2)} $\Rightarrow$ ({\bf A3}) is clear.
We now focus on the condition under which the implication
({\bf A3}) $\Rightarrow$ ({\bf A1}) holds.

\begin{proposition}
\label{prop2} Suppose $A \in B(H)$ satisfies {\bf (A3)}. Then
there is $\mu \in \overline{W(A)}$ such that $|\mu| = w(A)$.
Moreover, if such a  $\mu$ lies on two different support lines of $\overline{W(A)}$,
then {\bf (A1)} holds.
\end{proposition}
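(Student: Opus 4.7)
The first assertion is a compactness observation: $\overline{W(A)}$ is a closed bounded subset of $\mathbb{C}$ (boundedness since $W(A) \subseteq \overline{D}(0, \|A\|)$), hence compact, so the continuous modulus function attains its supremum $w(A)$ at some $\mu \in \overline{W(A)}$.

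For the moreover part, first normalize. If $\mu = 0$, then $w(A) = 0$ forces $A = 0$ and (A1) holds trivially. Otherwise, multiplying $A$ by the unit scalar $\bar\mu/|\mu|$ rotates $\mu$ onto the positive real axis, and rescaling reduces to $\mu = 1 = w(A)$; these operations preserve (A1), (A3), and the corner hypothesis, so (A1) now amounts to $A \ge 0$. In this normalization $\overline{W(A)} \subseteq \overline{D}(0, 1)$, and the vertical line $\{\operatorname{Re} z = 1\}$ automatically supports $\overline{W(A)}$ at $1$. The hypothesis supplies a second support line with outward normal $e^{i\alpha}$ for some $\alpha \ne 0$; replacing $A$ by $A^*$ if necessary (which preserves (A3) via the identity $\sigma(A^* B) = \overline{\sigma(AB^*)}$ and leaves the goal invariant) I may assume $\alpha \in (0, \pi/2]$, so that
$$
\overline{W(A)} \subseteq \{z : \operatorname{Re} z \le 1\} \cap \{z : \operatorname{Re}(e^{-i\alpha} z) \le \cos\alpha\},
$$
a wedge with tip at $1$.

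The main mechanism is to apply (A3) to rank-one operators $B = y \otimes x^*$ (the map $z \mapsto \langle z, x\rangle y$) for unit vectors $x, y \in H$. For such $B$, $\sigma(AB) = \{0, \langle Ay, x\rangle\}$, and by the elliptic range theorem $W(B)$ is the elliptical disk with foci $0$ and $\langle y, x\rangle$, semi-major axis $1/2$, and semi-minor axis $\sqrt{1 - |\langle y, x\rangle|^2}/2$. Thus (A3) yields $\langle Ay, x\rangle \in \overline{W(A) \cdot W(B)}$. Specializing to $y \perp x$ gives $W(B) = \overline{D}(0, 1/2)$ and the off-diagonal bound $|\langle Ay, x\rangle| \le w(A)/2 = 1/2$; applying the same argument to $B' = x \otimes y^*$ gives $|\langle Ax, y\rangle| \le 1/2$. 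I would then pick unit vectors $x_n$ with $\langle Ax_n, x_n\rangle \to 1$ (available since $1 \in \overline{W(A)}$), and for any test unit $v$, compress $A$ to $\operatorname{span}\{x_n, v_n\}$, where $v_n$ is the normalization of $v - \langle v, x_n\rangle x_n$. This yields a family of $2 \times 2$ compressions whose numerical ranges lie in the wedge, whose $(1,1)$-entries tend to $1$, and whose off-diagonal entries are bounded by $1/2$. Passing to the limit and invoking the elliptic range theorem together with the wedge constraint, I would conclude that $\langle Av, v\rangle$ must lie in $[0, 1]$ for every unit $v$, i.e., $A \ge 0$.

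The main obstacle is that last limiting step. An ellipse with one focus tending to $1$, off-diagonal width bounded by $1/2$, and numerical range constrained to the wedge is heavily restricted, but deducing that its other focus $\langle Av, v\rangle$ must lie in $[0, 1]$ requires exploiting the wedge angle $\alpha$ sharply --- otherwise, an ellipse with $\operatorname{Im}\langle Av, v\rangle \ne 0$ or $\operatorname{Re}\langle Av, v\rangle < 0$ could still fit. The corner hypothesis enters precisely here: without two distinct support lines, the wedge degenerates to a half-plane and no such rigidity is available. If the orthogonal tests alone prove insufficient, I expect to deploy additional rank-one tests with non-orthogonal $x, y$ (yielding shifted-focus ellipses $W(B)$ whose phase is aligned with the second-support direction $e^{i\alpha}$) to pin down the remaining geometric freedom.
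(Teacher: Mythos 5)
Your treatment of the first assertion is fine: as literally stated it is indeed just compactness of $\overline{W(A)}$ (though the content the paper actually extracts at this stage, via its Lemma on radialoid operators, is the stronger fact $w(A)=\|A\|=r(A)$ together with the reducing-eigenvector decomposition $A=\mu I\oplus A_1$, which your later limiting argument would also need in order to make the $2\times 2$ compressions converge to something diagonal). Your normalization and wedge setup for the ``moreover'' part match the paper's almost exactly (your $\alpha\in(0,\pi/2]$ corresponds to their $\alpha_1=\pi/2+\alpha$).

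The problem is that the entire content of the implication is in the step you flag as ``the main obstacle'' and then defer. The constraints you actually derive --- $\overline{W(A)}$ contained in the wedge and in $\overline{D}(0,1)$, $(1,1)$-entries of the compressions tending to $1$, and the bound $|\langle Ay,x\rangle|\le 1/2$ for orthonormal $x,y$ coming from $W(y\otimes x^*)=\overline{D}(0,1/2)$ --- are all satisfied by, e.g., $A=\operatorname{diag}(1,e^{i\epsilon})$ for small $\epsilon>0$: here $W(A)=[1,e^{i\epsilon}]$ has a corner at $1=w(A)$, every off-diagonal entry in every orthonormal basis has modulus at most $|e^{i\epsilon}-1|<1/2$, yet $A$ is not a multiple of a positive semi-definite operator. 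So no limiting argument based only on what you have established can reach the conclusion; the orthogonal rank-one tests cannot be pushed through. What is needed --- and what the paper's proof consists of --- is an explicit non-orthogonal rank-one test: after arranging a leading $2\times 2$ block $\operatorname{diag}(1,\,1+r_1e^{i\alpha_1})$ (which requires the reducing-eigenvector fact and, in infinite dimensions, the Berberian construction together with a lemma transferring {\bf (A3)} to the extension), one takes $B=B_0\oplus O$ with $B_0=2e^{i(\pi/2-\alpha_1)}\bigl(\begin{smallmatrix}\cos\theta&0\\ \sin\theta&0\end{smallmatrix}\bigr)$, where $\theta$ is tuned so that $\tan\theta\le 2r_1\sin\alpha_1$, computes the nonzero eigenvalue $\lambda=\operatorname{tr}(UAU^*B)$ of the rank-one product, and shows by a support-line estimate on $\bigcup_{z\in W(A)}W(zB)$ that $\operatorname{Re}\lambda$ strictly exceeds the largest real part available in $\overline{W(A)W(B)}$. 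Your proposal gestures at ``additional rank-one tests with non-orthogonal $x,y$'' but does not produce such a $B$ nor the quantitative inequality that makes the contradiction work, so the proof is not complete.
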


We need some preliminaries to prove Proposition \ref{prop2}.
Let $A \in B(H)$ satisfying \textbf{(A3)}. Note that conditions \textbf{(A1), (A2)} and \textbf{(A3)} will not be affected by replacing $A$ with
$\gamma UA^\ddag U^*$ for
any nonzero $\gamma$, unitary $U\in B(H)$, $A^\ddag \in \{A, A^t, A^*\}$.
We will use this fact in our proof.
%We have the following lemma.

\begin{lemma} \label{lemma} Suppose $A \in B(H)$ satisfies {\bf (A3)}.
\begin{itemize}
\item[{\rm (1)}] The operator $A$ is radialoid. That is, $r(A)=w(A)=\|A\|$.
\item[{\rm (2)}] Suppose $\eta \in W(A)$ satisfies $|\eta| = w(A)$.  Then
$A$ is unitarily similar to the orthogonal sum $\eta I \oplus A_1$ where $w(A_1) \le w(A)$
and $W(A_1) \subseteq \{\lambda: |\eta-\lambda| \le w(A)\}$. Consequently,
$|\eta -\mu| \le w(A)$ for all  $\mu \in W(A)$.
\end{itemize}
\end{lemma}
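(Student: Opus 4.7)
The plan is to apply \textbf{(A3)} to well-chosen rank-one operators $B = uw^*$, for which $AB$ is itself rank one with $\sigma(AB) = \{0, \langle Au, w\rangle\}$, so that the containment in \textbf{(A3)} becomes the directly usable scalar inequality $|\langle Au, w\rangle| \le w(A)w(B)$. For \emph{part (1)}, I start from the standard chain $r(A) \le w(A) \le \|A\|$. Choosing $B = xy^*$ with unit $x, y$ gives $|\langle Ax, y\rangle| \le w(A)\,w(B) \le w(A)\|B\| = w(A)$, and taking the supremum over $x, y$ yields $\|A\| \le w(A)$; hence $\|A\| = w(A)$. For $r(A) \ge w(A)$, pick $\eta \in \overline{W(A)}$ with $|\eta| = w(A)$ and unit $x_n$ with $\langle Ax_n, x_n\rangle \to \eta$. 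The squeeze $|\langle Ax_n, x_n\rangle| \le \|Ax_n\| \le \|A\| = |\eta|$ forces $\|Ax_n\| \to w(A)$, and expanding
$$
\|Ax_n - \eta x_n\|^2 = \|Ax_n\|^2 - 2\mathrm{Re}\bigl(\bar\eta \langle Ax_n, x_n\rangle\bigr) + |\eta|^2
$$
shows the right-hand side tends to $0$, so $\eta \in \sigma(A)$ and $r(A) \ge w(A)$.

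For \emph{part (2)}, since $\eta = \langle Ax, x\rangle$ is attained on a unit vector $x$, the squeeze $|\eta| = |\langle Ax, x\rangle| \le \|Ax\| \le \|A\| = |\eta|$ together with the equality case of Cauchy--Schwarz forces $Ax = \eta x$. The essential structural step is to upgrade this to $A^*x = \bar\eta x$. For each unit $v \perp x$ I consider the $2 \times 2$ compression of $A$ to $\Span(x, v)$,
$$
M_v = \begin{pmatrix} \eta & \langle Av, x\rangle \\ 0 & \langle Av, v\rangle \end{pmatrix}.
$$
By the elliptic range theorem, if $\langle Av, x\rangle \neq 0$ then $W(M_v)$ is a non-degenerate elliptical disk with foci $\eta$ and $\langle Av, v\rangle$, placing $\eta$ in its interior. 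But $|\eta| = w(A)$ puts $\eta$ on $\partial W(A)$, and $W(M_v) \subseteq W(A)$ then forces $\eta \in \partial W(M_v)$, a contradiction. Thus $\langle Av, x\rangle = 0$ for every $v \perp x$, giving $A^*x = \bar\eta x$. Applied to each $x \in M := \ker(A - \eta I)$, this shows $M$ reduces $A$, yielding the orthogonal decomposition $A = \eta I_M \oplus A_1$ on $H = M \oplus M^\perp$; the bound $w(A_1) \le w(A)$ is immediate from $W(A_1) \subseteq W(A)$.

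To prove $W(A_1) \subseteq \{\lambda : |\eta - \lambda| \le w(A)\}$, fix unit $x \in M$, unit $v \in M^\perp$, set $\mu := \langle A_1 v, v\rangle$, and apply \textbf{(A3)} to the rank-one operator $B := (x + v)(x - v)^*$. Using $Ax = \eta x$ and $Av \in M^\perp$, one obtains $\tr(AB) = \langle A(x+v), x-v\rangle = \eta - \mu$, so $\sigma(AB) = \{0, \eta - \mu\}$. In the orthonormal basis $\bigl\{(x+v)/\sqrt{2},\,(x-v)/\sqrt{2}\bigr\}$ of $\Span(x, v)$, $B$ takes the form $\begin{pmatrix} 0 & 2 \\ 0 & 0 \end{pmatrix}$, and $B$ vanishes on $\Span(x, v)^\perp$; hence $W(B)$ is the closed unit disk and $w(B) = 1$. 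Therefore \textbf{(A3)} yields $|\eta - \mu| \le w(A)$, establishing the claim. The concluding statement $|\eta - \mu| \le w(A)$ for all $\mu \in W(A)$ follows because the closed disk $\{\lambda : |\lambda - \eta| \le w(A)\}$ is convex and already contains $\eta$, hence contains the convex hull $W(A) = \conv(\{\eta\} \cup W(A_1))$.

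The main obstacle is the implication $Ax = \eta x \Rightarrow A^*x = \bar\eta x$: only \textbf{(A3)} is available, yet one must extract a normality-type reflection from it; the bridge is the elliptic range theorem applied to the $2 \times 2$ compressions $M_v$, driven by the observation that $\eta$ must lie on $\partial W(A)$. The remaining parts reduce to selecting the right rank-one $B$ (notably $(x+v)(x-v)^*$ for the disk bound) and computing traces and numerical radii.
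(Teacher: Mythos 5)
Your proof is correct and follows essentially the same route as the paper: rank-one operators $B$ are chosen so that $\tr(AB)$ is exactly the quantity to be bounded, giving $\|A\|\le w(A)$ in part (1) and $|\eta-\mu|\le w(A)$ in part (2) (your $B=(x+v)(x-v)^*$ is the paper's $B=(U\oplus I)^*\left(\begin{smallmatrix}0&0\\ 2&0\end{smallmatrix}\right)\oplus O\,(U\oplus I)$ after the change of basis). The only divergence is your use of the elliptic range theorem to deduce $A^*x=\bar\eta x$; the paper obtains this in one line by applying the same Pythagoras/norm argument to $A^*$, using $\|A^*\|=\|A\|=|\bar\eta|=|\langle A^*x,x\rangle|$.
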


\begin{proof} We may replace $A$ by $A/\|A\|$ and assume that $\|A\| = 1$.
To prove (1),
suppose $Ax_n = \lambda_n y_n$ for some unit vectors
$x_n, y_n \in H$ with  positive scalars $\lambda_n\uparrow 1$.
Let $B_n\in B(H)$ be the rank one operator
$z\mapsto \langle z,y_n\rangle x_n$. Then $AB_ny_n = \lambda_ny_n$, and thus
$\lambda_n \in \sigma(AB_n) \subseteq \overline{W(A)W(B_n)}$. Since $w(B_n)\leq 1$,
we have $\lambda_n\leq w(A)$. That is, $1\leq w(A)$. Thus $w(A)=1$, and hence
$r(A)=1$ (see \cite[Theorem 1.3-2]{G97}).

Next, consider (2).
Suppose $\eta \in W(A)$ with $|\eta| = 1$. Then for any unit vector
$x$ such that $\langle Ax, x\rangle = \eta$, we write
$Ax = \eta x + \nu y$ for some unit vector $y$ orthogonal to $x$.
Then
$$
|\eta|^2 = \|A\|^2 \ge \|Ax\|^2 = |\eta|^2 + |\nu|^2.
$$
Thus, $\nu = 0$ and $Ax=\eta x$. Similarly, we can show that $A^*x = \bar \eta x$. The first assertion follows.

For the second assertion, we may replace $A$ by $A/\eta$ and
assume that $A=\begin{pmatrix}
1 & 0 \cr
0 & A_1 \cr\end{pmatrix}$.
Let $\lambda\in W(A_1)$.
We can assume that the leading (upper left) $2\times 2$
submatrix of $A$ is
$\begin{pmatrix}
1 & 0 \cr
0 & \lambda \cr\end{pmatrix}$.
Let  $U =\frac{1}{\sqrt{2}}\begin{pmatrix}
1 & 1 \cr
1 & -1 \cr\end{pmatrix}$ be  the $2\times 2$ unitary matrix.  Then
the leading $2\times 2$ submatrix of
$\hat A = (U\oplus I)A(U\oplus I)^*$  equals
$A_0 = \frac{1}{2}\begin{pmatrix}
1+\lambda & 1-\lambda\cr
1-\lambda & 1+\lambda\cr\end{pmatrix}$.
Let $B = (U\oplus I)^*\left(\begin{pmatrix}
0 & 0 \cr
2 & 0 \cr\end{pmatrix} \oplus O \right)(U\oplus I)$.
Then $\tr (AB) = 1-\lambda$ is the nonzero eigenvalue
of the rank one matrix $AB$.
Since $\sigma(AB) \subseteq \overline{W(A)W(B)} \subseteq \{z\in \IC: |z| \le 1\}$,
we have $|1-\lambda| \leq 1$.

The last assertion follows from the fact that
$W(A) = \conv(\{1\} \cup W(A_1))$.
\end{proof}

In \cite{A12}, the authors showed that if a matrix $A$ satisfies {\bf (A2)}, then
there is  $\mu\in {W(A)}$ satisfying $|\mu|=\|A\|$, and tried to
prove that {\bf (A1)} holds.  Lemma \ref{lemma}(1) shows that for any
$A \in B(H)$ satisfying ({\bf A3}),
there is  $\mu\in \overline{W(A)}$ such that $|\mu|=\|A\|$ and $\mu$ is an eigenvalue of $A$.

We will use Lemma \ref{lemma} to prove Proposition \ref{prop2}.
In the finite dimensional case,
$W(A)=\overline{W(A)}$ is compact, and there are unit vectors attaining
the norm of $A$.  However, it might not be the case if the underlying Hilbert space $H$
is
infinite dimensional.  Nevertheless,  we can use the Berberian construction
(see \cite{B62}) to overcome this technicality.

In connection to our problem, we will impose additional requirement in the Berberian construction, namely, we will need a generalized Banach limit which is multiplicative.  We include some details of the construction for completeness. We identify the space $\ell_\infty$ of bounded scalar sequences with  the $C^*$-algebra
%$C^b({\mathbb N})$ of bounded continuous functions on  the natural numbers $\mathbb N$, or
$C(\beta\mathbb{N})$ of
continuous functions on the Stone-Cech compactification
$\beta \mathbb N$ of $\mathbb N$.  Here,
a bounded sequence $\lambda=(\lambda_n)$ in $\ell_\infty$ corresponds
to a function $\hat{\lambda}$ in  $C(\beta\mathbb{N})$
with $\hat{\lambda}(n)=\lambda_n$ for all $n=1,2,\ldots$.
Take any point $\xi$ from $\beta\mathbb{N}\setminus \mathbb{N}$.
The point evaluation $\lambda\mapsto \hat{\lambda}(\xi)$ of $\ell_\infty$ gives a
nonzero multiplicative
generalized Banach limit, denoted by $\glim$, that  satisfies the following conditions.
For any bounded sequences $(a_n)$ and $(b_n)$ in $\ell_\infty$ and scalar $\gamma$, we have
\begin{enumerate}[(a)]
    \item $\glim (a_n+b_n) = \glim (a_n) + \glim (b_n)$.
    \item $\glim (\gamma a_n) = \gamma \glim (a_n)$.
    \item $\glim (a_n) = \lim a_n$ whenever $\lim a_n$ exists.
    \item $\glim (a_n) \ge 0$ whenever $a_n \ge 0$ for all $n$.
    \item $\glim (a_nb_n) = \glim (a_n)\glim (b_n)$.
\end{enumerate}
Equivalently, we can define $\glim (a_n) = \lim_{\mathfrak U} a_n$ through a free ultrafilter $\mathfrak U$ on $\mathbb N$,
when we consider $\beta \mathbb{N}$ consisting of ultrafilters on $\mathbb N$ and those outside $\mathbb N$ are free
(i.e. $\bigcap \mathfrak{U} = \emptyset$). Note that
all multiplicative generalized Banach limits on $\ell_\infty$ arise from the above construction.
Note also that we do not assume the translation invariant property on $\glim$.
Indeed, the only translation invariant multiplicative generalized Banach limit is zero.

Denote by $\cV$ the set of all bounded
sequences $ \{x_n\}$ with $x_n \in  H$. Then $\cV$ is a vector
space relative to the definitions $\{x_n\}+\{y_n\} = \{x_n + y_n\}$
and $\gamma \{x_n\} = \{\gamma x_n\}$.  Let $\cN$
be the set of all   sequences $\{x_n\}$ such that $\glim (\langle x_n,x_n\rangle) = 0$.
Then $\cN$ is a
linear subspace of $\cV$. Denote by $\x $ the coset $\{x_n\}+\cN$. The
quotient vector space $\cV/\cN$ becomes an inner product space with
the inner product
$\langle \x ,\y \rangle  = \glim (\langle x_n,y_n\rangle)$.
Let $K$ be the completion of $\cV/\cN$. If
$x \in H$, then $ \{x\}$ denotes the  constant sequence defined by
$x$. Since $\langle\x,\y\rangle =  \langle x,y\rangle $ for $\x  =
\{x\}+\cN$ and $\y  = \{y\}+\cN$, the mapping $x \mapsto \x $ is an isometric
linear map of $H$ onto a closed subspace  of $K $
and $K$ is an
extension of $H$. For an operator $A \in  {\mathcal B}(H)$, define
$$A_0(\{x_n\}+\cN) = \{Ax_n\} + \cN.$$
We can extend $A_0$ on $K$, which will be denoted by $A_0$ also.  The
mapping $\phi: {\mathcal B}(H)\to  {\mathcal B}(K)$ given by $\phi(T)= \tilde T$  is
a unital   isometric
$*$-representation with $\sigma(T)=\sigma(\tilde T)$.
Moreover, the approximate eigenvalues of $T$ (and also $\tilde T$)
will become eigenvalues of $\tilde T$. See \cite{B62}.

It is clear that rank one operators in $B(H)$ become rank one operators in $B(K)$.
However, rank one operators in $B(K)$ does not necessarily come from rank one operators in $B(H)$.
A counter example can be given by the rank one operator $e\otimes e$ defined by
$k\mapsto \langle k, e\rangle e$
for a nonzero vector $e$ in $K$ orthogonal to $H$.
Nevertheless, in connection to our study, we have the following.

\begin{lemma}
Let $\tilde A \in B(K)$ be the extension of $A \in B(H)$ in the Berberian construction.
Suppose $\sigma(AB) \subseteq \overline{W(A)W(B)}$ for all rank one $B \in B(H)$.
Then
$$
W(\tilde A) = \overline{W(A)}, \quad\text{and}\quad
\sigma(\tilde A B') \subseteq W(\tilde A)W(B') \text{ for all rank one } B' \in B(K).
$$
\end{lemma}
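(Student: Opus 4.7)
The plan is to prove the two assertions separately, both through the defining identity $\langle \tilde A\x,\x\rangle = \glim\langle Ax_n,x_n\rangle$ valid on the dense subspace $\cV/\cN\subset K$. For $W(\tilde A)=\overline{W(A)}$, I would argue both inclusions directly. Any $\lambda\in\overline{W(A)}$ is an ordinary limit $\langle Ax_n,x_n\rangle\to\lambda$ for some unit vectors $x_n\in H$; then $\x=\{x_n\}+\cN$ is a unit vector in $K$ and $\langle\tilde A\x,\x\rangle=\glim\langle Ax_n,x_n\rangle=\lambda$, since $\glim$ agrees with ordinary limits. Conversely, for any unit $\x=\{x_n\}+\cN\in\cV/\cN$, multiplicativity of $\glim$ normalizes each $\langle Ax_n,x_n\rangle$ into a point of $W(A)$, and the $\glim$-limit of a sequence in the bounded set $\overline{W(A)}\subset\IC$ lies in $\overline{W(A)}$ by compactness; continuity of the inner product propagates this inclusion from $\cV/\cN$ to all of $K$.

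For the spectral inclusion, fix a rank-one $B'=\u\otimes\v\in B(K)$. Since $\tilde AB'$ is itself rank one, its only possibly nonzero spectral point is $\lambda_0=\langle\tilde A\u,\v\rangle$; the case $\lambda_0=0$ is automatic, as the infinite-dimensional $K$ supplies a unit vector orthogonal to $\u$, forcing $0\in W(B')\subseteq W(\tilde A)W(B')$. When $\lambda_0\ne 0$, I first take $\u,\v\in\cV/\cN$, say $\u=\{u_n\}+\cN$ and $\v=\{v_n\}+\cN$, and set $B_n=u_n\otimes v_n\in B(H)$. Since $\overline{W(A)},\overline{W(B_n)}$ are compact subsets of $\IC$, one has $\overline{W(A)W(B_n)}=\overline{W(A)}\cdot\overline{W(B_n)}$, so the hypothesis lets me factor $\langle Au_n,v_n\rangle=\alpha_n\beta_n$ with $\alpha_n\in\overline{W(A)}$ and $\beta_n\in\overline{W(B_n)}$. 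Picking unit vectors $x_n,y_n\in H$ approximating $\alpha_n$ and $\beta_n$ in the respective numerical ranges to within $1/n$, and setting $\x=\{x_n\}+\cN$, $\y=\{y_n\}+\cN$ (unit vectors in $K$), multiplicativity of $\glim$ gives
\[
\langle\tilde A\x,\x\rangle\,\langle B'\y,\y\rangle=\glim\alpha_n\cdot\glim\beta_n=\glim(\alpha_n\beta_n)=\glim\langle Au_n,v_n\rangle=\lambda_0,
\]
where $\langle B'\y,\y\rangle=\glim\langle y_n,v_n\rangle\cdot\glim\langle u_n,y_n\rangle=\glim\langle B_ny_n,y_n\rangle$ itself uses multiplicativity on the identity $\langle B_ny_n,y_n\rangle=\langle y_n,v_n\rangle\langle u_n,y_n\rangle$. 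Thus $\lambda_0\in W(\tilde A)\,W(B')$. For general $\u,\v\in K$, I approximate by $\u_k,\v_k\in\cV/\cN$ in norm; then $B'_k=\u_k\otimes\v_k\to B'$ in operator norm, $\lambda_0^{(k)}\to\lambda_0$, and $W(B'_k)\to W(B')$ in Hausdorff distance, and compactness of $W(\tilde A)=\overline{W(A)}$ lets me extract a subsequential factorization $\lambda_0=ab$ with $a\in W(\tilde A)$, $b\in W(B')$.

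The hardest step, in my view, is the passage through $\glim$: the hypothesis supplies only a coordinatewise factorization $\alpha_n\beta_n=\langle Au_n,v_n\rangle$, and I must transport it into a \emph{single} pair of unit vectors $\x,\y\in K$ whose values under $\tilde A$ and $B'$ realize the product structure of $\lambda_0$. This is precisely where multiplicativity of $\glim$ is indispensable---a merely translation-invariant generalized Banach limit would not split the product of two coordinatewise factors into the product of their limits, so the paper's choice of a multiplicative generalized Banach limit is the key technical lever. Once this is in place, the verification that $\x,\y$ are genuine unit vectors producing the correct inner products against $\u,\v$ is routine Berberian bookkeeping.
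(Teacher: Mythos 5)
Your proof is correct and follows essentially the same route as the paper: apply the hypothesis to the rank-one operators $u_n\otimes v_n$ built from representing sequences, factor the resulting trace values inside $\overline{W(A)}\cdot\overline{W(B_n)}$, and recombine the factors into single vectors of $K$ using the multiplicativity of $\glim$ (the paper realizes the factors exactly via unit vectors in $K$ and $H$ rather than approximating within $1/n$, but this is the same idea). Your final density step extending from $B'=\u\otimes\v$ with $\u,\v\in\cV/\cN$ to arbitrary rank-one $B'\in B(K)$ is a welcome addition that the paper's written proof leaves implicit.
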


\begin{proof}
Clearly, we have  $W(\tilde A) \subseteq \overline{W(A)}$.
On the other hand, if $\langle Ax_n,x_n\rangle \rightarrow \mu \in \overline{W(A)}$,
then $\mu=\glim (\langle Ax_n,x_n\rangle) =
\langle \tilde A\x,\x\rangle\in W(\tilde A)$ with $\x$ arising from the sequence $\{x_n\}$
of unit vectors in $H$.
Thus, $W(\tilde A) = \overline{W(A)}$.

To prove the second assertion, we
make some simple observations.

\begin{enumerate}[{1.}]

\item  $W(B) = \overline{W(B)}$ for any rank one $B \in B(H)$.

\iffalse
\item
If $a_nb_n \in \overline{W(A)W(B)}$ and $\{a_nb_n\}$ converges to $\mu$,
we may consider subsequences and assume that
$\{b_n\} \rightarrow b \in \overline{W(B)} = W(B)$
and $\{a_n\} \rightarrow a \in \overline{W(A)}$.
\fi

\item
If $\x, \y, \u \in K$ correspond to the sequences $\{x_n\}$, $\{y_n\}$ and $\{u_n\}$ of unit vectors
 in $H$, then
$$
\langle (\x\otimes \y)\u,\u\rangle  = \langle \u,\y\rangle\langle \x,\u\rangle
= \glim (\langle u_n,y_n\rangle) \glim(\langle x_n,u_n\rangle)
= \glim [(\langle u_n,y_n\rangle)(\langle x_n,u_n\rangle)].
$$
\end{enumerate}

%Now suppose $\tilde A \in B(K)$ is an extension of $A \in B(H)$.
Let $B' = \x \otimes \y$ with  $\x, \y$ in $K$
arising from the sequences $\{x_n\}, \{y_n\}$ of
unit vectors in $H$.
We will show that
$$
\sigma(\tilde A B') \subseteq \{ \langle\tilde A \x,\y\rangle, 0\}
\subseteq W(\tilde A)W(B').
$$
Obviously, $0 \in W(\tilde A)W(B')$.  Observe that
$$
\langle\tilde A \x, \y\rangle
= \glim  (\langle Ax_n,y_n\rangle),
$$
where
$$
\langle Ax_n,y_n\rangle\in \sigma(A(x_n\otimes y_n)) \subseteq \overline{W(A) W(x_n\otimes y_n)}
=  W(\tilde A) W(x_n \otimes y_n)
$$
by the fact that $\overline {W(A)} = W(\tilde A)$ and 1. above.  It follows that
$$
\langle Ax_n,y_n\rangle = \langle\tilde A v_n,v_n\rangle \langle (x_n\otimes y_n)u_n,u_n\rangle
= \langle\tilde A v_n,v_n\rangle \langle u_n,y_n\rangle \langle x_n,u_n\rangle
$$
for some unit vector $v_n \in K$ and unit vector $u_n \in H$.
%By boundedness, we can assume that all three inner products in the last term above converge.
By 2. above,
$$
\glim [(\langle u_n,y_n\rangle)(\langle x_n,u_n\rangle)]=
%\glim(u_n,y_n)\glim(x_n, u_n) =
\langle(\x\otimes \y)\u,\u\rangle\in
W(\x\otimes \y)=W(B').
$$
By the compactness of  $W(\tilde A)$ and $W(B')$, we have
$$
\glim \langle Ax_n,y_n\rangle = \glim (\langle\tilde A v_n, v_n\rangle) \glim [(\langle u_n,y_n\rangle)(\langle x_n,u_n\rangle)]
\in  W(\tilde A)W(B').
$$
\vskip -.3in
\end{proof}

\begin{proof}[Proof of Proposition \ref{prop2}.]
We may replace $A$ by $A/\|A\|$ and assume that $\|A\| = 1$. Furthermore,
we may apply the Berberian construction and assume that
conditions (a) -- (e) hold. For simplicity, we assume $H = K$ and $A = \tilde A$.

By Lemma \ref{lemma}, we have
$r(A) = w(A) = \|A\|=1$ and
we may assume that $A = \mu I \oplus A_1$ for some contraction $A_1$
such that $\mu$ is not an eigenvalue of $A_1$.  Without loss of generality,
we may assume that $\mu = 1$, $A = I \oplus A_1$.
We need to show that $A_1$ is positive semi-definite.
Assume that it is not the case so that
$W(A_1) \not\subseteq [0,1]$.

By our assumption, there is a support line  of $W(A)$ passing through $1$
and $1+r_1 e^{i\alpha_1}$ for some $\pi/2 < \alpha_1 < 3\pi/2$ and $r_1 \in [0,1]$.  Replacing $A$ with $A^*$ if necessary, we can also assume that $\pi/2 < \alpha_1 < \pi$ and
$$
W(A) \subseteq \{z = 1 + r e^{i\alpha}: r \in [0,1],
\ \alpha_1 \le \alpha \le 3\pi/2, \ |z| \le 1\}.
$$

%Assume first that $\pi/2 < \alpha_1 < 2\pi/2$.
Let $B = B_0 \oplus O$ with
$$B_0 = 2e^{i(\pi/2-\alpha_1)}
\begin{pmatrix} \cos \theta & 0 \cr \sin \theta & 0 \cr \end{pmatrix},$$
where $\theta \in (0, \pi/2)$ such that
$2r_1\sin\alpha_1 \ge \tan \theta.$
Observe that
\begin{align*}
(\sin\alpha_1 \cos \theta + r_1\sin\theta)^2
&= (\sin\alpha_1\cos\theta)^2 + 2r_1\sin\alpha_1 \cos \theta\sin \theta + r_1^2\sin^2\theta\\
&> (\sin\alpha_1\cos\theta)^2 + \sin^2\theta,
\end{align*}
i.e.,
\begin{equation}
\label{theta}
\sin\alpha_1 \cos \theta + r_1\sin\theta >
\sqrt{(\sin\alpha_1\cos\theta)^2 + \sin^2\theta}.
\end{equation}
With a suitable unitary  transform, we may assume that
$A$ has a leading $2\times 2$  submatrix
$A_0 = \left(
         \begin{array}{cc}
           1 & 0\\
           0 & 1+r_1e^{i\alpha_1} \\
         \end{array}
       \right)$.
Let $U = U_0 \oplus I$
with $U_0 = \frac{1}{\sqrt{2}}\begin{pmatrix} i & -i \cr 1 & 1\cr
\end{pmatrix}$.
Then
\begin{eqnarray*}
\lambda = \tr(UAU^*B) &=&
\tr(U_0A_0U_0^*B_0) \\
&=& 2e^{i(\pi/2-\alpha_1)}\cos\theta + r_1e^{i\alpha_1} \tr(U_0E_{22}U_0^*B_0) \\
&=& 2e^{i(\pi/2-\alpha_1)}\cos\theta + r_1e^{i\alpha_1} \tr(E_{22}U_0^*B_0U_0) \\
&=&  2\sin \alpha_1\cos \theta + r_1 \sin \theta
+ i(2\cos \alpha_1\cos\theta+r_1\cos\theta),\end{eqnarray*}
which is the nonzero eigenvalue of the rank one matrix $UAU^*B$.

To derive a contradiction, we will show that
\begin{equation}\label{lambda}
\lambda \notin  W(A)W(B) = \bigcup_{z\in W(A)} W(zB).
\end{equation}
Recall that for any compact operator, and thus any finite matrix, $T$, the right support line of
$W(T)$ is the set of complex numbers with
real part equal to the maximum eigenvalue of $(T+T^*)/2$.
For each $z = 1+re^{i\alpha} \in W(A)$ with $r \in [0,1]$ and
$\alpha_1 \le \alpha < 3\pi/2$,
the maximum eigenvalue of the matrix
$$\frac{1}{2}
\left(zB_0 + \overline{z} B_0^* \right)=
\begin{pmatrix} (\sin\alpha_1 + r \sin(\alpha_1-\alpha))2\cos\theta
& -i\cdot e^{i\alpha_1}\cdot\overline{z} \sin \theta \cr i\cdot e^{-i\alpha_1}\cdot z \sin \theta
 & 0\cr
\end{pmatrix}$$
equals
\begin{equation} \label{gamma}
\gamma + \sqrt{\gamma^2  + |z|^2 \sin^2\theta} \qquad \hbox{ with }
\ \gamma = (\sin \alpha_1  + r\sin(\alpha_1-\alpha))\cos \theta.
\end{equation}
Because $\pi/2 < \alpha_1 \le \alpha  < 3\pi/2$,
we have $\alpha_1 - \alpha \in (-\pi,0)$ and
$\sin(\alpha_1-\alpha) \le 0$.
Suppose $\gamma \ge 0$, i.e., $\sin\alpha_1 \ge |r\sin(\alpha_1-\alpha)|$.
Then by (\ref{theta})
we have
$$\gamma + \sqrt{\gamma^2 + |z|^2 \sin^2\theta}
\le  \sin\alpha_1 \cos\theta +
\sqrt{(\sin\alpha_1\cos\theta)^2 +  \sin^2\theta}
< 2\sin\alpha_1 \cos\theta + r_1\sin\theta.$$
If $\gamma < 0$, i.e., $\sin\alpha_1 < |r\sin(\alpha_1-\alpha)|$,
then by (\ref{theta}) we have
$$\gamma + \sqrt{\gamma^2 + |z|^2\sin^2\theta} < |z| \sin\theta <
2\sin\alpha_1 \cos\theta + r_1\sin\theta.$$
Thus, the real part of every point in  $W(zB)$
is strictly less than $2\sin\alpha_1\cos \theta + r_1 \sin\theta$,
and not equal to $\lambda$. Since this is true for any $z \in W(A)$,
we get the desired contradiction.
\end{proof}

\medskip

\begin{proof}[Proof of Theorem \ref{thm2}]
We want to show that the implication {\bf (A3)} $\Rightarrow$ {\bf (A1)}
is valid when $A$ is  compact.
Let $A \in B(H)$ be compact satisfying {\bf (A3)}.
Since $A$ is compact, every nonzero element in $\sigma(A)$ is an eigenvalue of $A$. Hence we have $\sigma(A)\setminus\{0\}\subseteq  W(A)$.
In view of Lemma \ref{lemma}, we can assume that $\|A\|=r(A)=w(A)=1$ which  is an eigenvalue of $A$, and write
$A = I \oplus A_1$ such that $\|A_1\| \leq 1$ and
$1 \notin \sigma(A_1)$.
Note that   the
largest eigenvalue $\lambda$ of the compact  operator $(A_1+A_1^*)/2$ is less than $1$.
Indeed, if there is a unit vector $x$ such that $(A_1+A_1^*)x/2=x$, then the inequality
$$1=\left\langle \frac{A_1+A_1^*}{2}x, x\right\rangle=\frac{1}{2}\langle A_1x, x\rangle+ \frac{1}{2} \langle A_1^*x, x\rangle \leq1$$
implies that $1=\langle A_1x, x\rangle$ and hence $A_1x=x$,  contradicting that $1 \notin \sigma(A_1)$.
Since
$$
W(A_1) \subseteq \{ \nu \in \IC: |\nu|\leq 1, |1-\nu| \le 1, \text{ and } (\nu + \bar \nu)/2 \leq \lambda\},
$$
and $W(A) = \conv(\{1\} \cup W(A_1))$, we see that there are two
different support lines of $W(A)$ passing through $1$.
It then holds  {\bf (A1)} by Proposition \ref{prop2}.
\end{proof}

\begin{proof}[Proof of Corollary \ref{cor}]
The assertions in (1) are clear.
For (2), let
$A\in B(H)$ be normal satisfying {\bf (A3)} and let, without loss of generality, $1=\|A\|$ be an
isolated point in the spectrum $\sigma(A)$ of $A$.
Write
$A = I_1 \oplus  A_1$
(an orthogonal sum), where  $I_1$ is the eigen-projection  of  $A$ for $1$, and $1\notin \sigma(A_1)$.
Moreover, we can separate $1$ from $\sigma(A_1)$ by a straight line in the complex plane.
Consequently, $1$ and $\overline{W(A_1)}=\conv{\sigma(A_1)}$ are contained in two disjoint open half spaces.
Since $W(A)$ is the convex hull of the set
$\{1\}\cup W(A_1)$,
we see that $1$ lies on two different support lines of $\overline{W(A)}$.
Proposition \ref{prop2} applies and finishes the proof.
\end{proof}

\begin{proof}[Verification of Example \ref{eg}.]
By the Berberian construction, we may assume that
$A = I+T$ such that $T$ is normal and every point $e^{it}$ on the unit
circle is an eigenvalue.
Suppose  that $\lambda \in \sigma(AB)$. The case $\lambda = 0$ is done, since
$0 \in W(A)W(B)$ as $0 \in W(A)$.
Suppose $\lambda \ne 0$. Because
$\sigma(AB)$ and $\sigma(BA)$ have the same nonzero elements, we see that
$\lambda \in \sigma(BA)$.

Assume first that $\lambda$ is an approximate eigenvalue of $BA$.
By the Berberian construction, we may assume that there is a unit vector $x$ such that
$BAx = \lambda x$.  Let $Ax = a_{11} x + a_{21} y$
such that $a_{11} = \langle Ax,x\rangle$ and $y$
is a unit vector  orthogonal to $x$.
Because $A-I$ is unitary, we have
\begin{align}\label{eq:a11}
|a_{11}-1|^2 + |a_{21}|^2 = 1.
\end{align}
Using an orthonormal basis with $x, y$ as the first two vectors,
and abusing notations for matrices of uncountable sizes,
we see that the operator matrices of $A$ and $B$ have the form
$$\begin{pmatrix} a_{11} & * & * \cr a_{21} & * & * \cr 0 & * & * \cr\end{pmatrix}
\quad \hbox{ and } \quad
\begin{pmatrix} B_1 & * \cr * & *  \cr\end{pmatrix} \quad
\hbox{ with } \quad  B_1 =
\begin{pmatrix} b_{11} & b_{12} \cr b_{21} & b_{22} \cr\end{pmatrix}.
$$
Then $W(B_1) \subseteq W(B)$,
$$\lambda = b_{11}a_{11} + b_{12} a_{21} \quad \hbox{ and } \quad
b_{21} a_{11} + b_{22} a_{21} = 0.$$
It follows from \eqref{eq:a11} that $a_{11}-1$ lies in the closed unit complex disk.  Hence
we can write
$$
a_{11} -1 = -\alpha + (1-\alpha)e^{ir}
$$
for some $r \in [0, 2\pi)\setminus\{\pi\}$ and some $\alpha\in [0,1]$.

Let $A_0 = \left(
         \begin{array}{cc}
           0 & 0\\
           0 & 1+e^{ir} \\
         \end{array}
       \right)$
be the compression of $A$ on the
two dimensional subspace spanned by $\{f_{\pi}, f_r\}$.
Let $u=\sqrt{\alpha} f_\pi+ \sqrt{1-\alpha} f_r$.  Then
$$
\langle (A_0 - I_2)u,u\rangle = a_{11}-1.
$$
Because $A_0-I_2$ is unitary, $\|(A_0 - I_2)u\|=1$.  In view of \eqref{eq:a11},
we see that $A_0-I_2$ is unitarily similar to a matrix of the form
$\begin{pmatrix} a_{11}-1 & * \cr a_{21} & * \cr\end{pmatrix}.$
Hence, $A_0$ is unitarily similar to
$A_1 = \begin{pmatrix} a_{11} & * \cr a_{21} & * \cr\end{pmatrix}$
and $W(A_1) = W(A_0) \subseteq W(A)$.
Note that $B_1A_1$ is in upper triangular form with $\lambda$ lying in the
$(1,1)$ position. Thus, $\lambda \in \sigma(B_1A_1)$.
Note that $A_0$, as well as $A_1$, is a multiple of positive semi-definite matrix.
By the implication \textbf{(A1)} $\Rightarrow$ \textbf{(A2)} in Theorem \ref{thm},
we have
$$
\lambda \in \sigma(B_1  A_1) \subseteq W(B_1)W(A_1)\subseteq \overline{W(B)W(A)}.
$$

At this point, we have shown that $\overline{W(B)W(A)}$
contains all approximate eigenvalues of $BA$.
Let $\alpha\in \sigma(BA)$ and $\alpha$ is not an approximate eigenvalue of $BA$.
Then $\alpha$ is in the
interior of $\sigma(BA)$ and thus there is an approximate eigenvalue $\lambda$ of $BA$,
which is a boundary point of $\sigma(BA)$,
such that $\alpha = \beta\lambda$ with $0<\beta<1$.  Since
$\lambda\in \overline{W(B)W(A)}= \bigcup_{|z-1|\leq 1} z\overline{W(B)}$,
we have $\alpha = \beta zb$ for some $b\in \overline{W(B)}$ and $z$ satisfying $|z-1|\leq 1$.
Since $|\beta z - 1| \leq \beta |z-1| + (1-\beta) \leq 1$,
we have $\alpha\in \overline{W(B)W(A)}$ as well.
\end{proof}

\begin{proof}[Verification of Example \ref{eg2}.]
For any  $B$ in $B(H)$, we
show that $\sigma(\hat AB) \subseteq \sigma(B\hat A)\cup\{ 0 \}  \subseteq W(\hat A)W(B)$. Since $0 \in W(\hat A)$, we have $0 \in W(\hat A)W(B)$. So, we focus on those nonzero $\lambda \in  \sigma(B\hat A)$.

Similar to the Verification of Example \ref{eg},
we only need to consider the case when $\lambda$ is a
nonzero approximate eigenvalue of $B\hat{A}$.
Using a similar argument as in the  Verification of Example \ref{eg}, we may
assume that the operator matrices of $\hat A$ and $B$ have the form
$$\begin{pmatrix} a_{11} & * & * \cr a_{21} & * & * \cr 0 & * & * \cr\end{pmatrix}
\quad \hbox{ and } \quad
\begin{pmatrix} B_1 & * \cr * & *  \cr\end{pmatrix} \quad
\hbox{ with } \quad  B_1 =
\begin{pmatrix} b_{11} & b_{12} \cr b_{21} & b_{22} \cr\end{pmatrix}.
$$
Then $W(B_1) \subseteq W(B)$, and
$$\lambda = b_{11}a_{11} + b_{12} a_{21} \quad \hbox{ and } \quad
b_{21} a_{11} + b_{22} a_{21} = 0.$$
Because $\hat A - I$ is a contraction, we see that
$$|a_{11}-1|^2 + |a_{21}|^2 \le 1.$$
We can then construct a unitary matrix $A_0 \in M_3$
with first column equal to $(a_{11}-1, a_{21}, a_{31})^t$, where
$a_{31}=(1-|a_{11}-1|^2-|a_{21}|^2)^{1/2}$.
Since $A-I$ is a unitary operator with spectrum $\{e^{it}: t \in [0, 2\pi)\}$,
we may regard $A_0$ as a compression of $A-I$, and hence
$I+A_0$ is a compression of $A$ and can be viewed as
the leading principal submatrix of $UAU^*$, whose first column has only three
nonzero entries, namely, $a_{11}, a_{21}, a_{31}$. So, the first column of
$(B_1\oplus [0])(I+A_0)$ equals $(\lambda, 0,0)^t$, and thus the first column of
$(B_1 \oplus O)(UAU^*)$ has only one nonzero entry $\lambda$ lying in the $(1,1)$ position.
Since $A$ satisfies \textbf{(A2)}, we have
\begin{align*}
\lambda &\in \sigma((B_1 \oplus O)(UAU^*)) = \sigma((U^*(B_1 \oplus O)U)A)\\
&\subseteq \sigma(A(U^*(B_1 \oplus O)U))\cup\{0\}\subseteq W(A)W(U^*(B_1 \oplus O)U)
= W(A)W(B_1 \oplus O).
\end{align*}
Note that
$W(B_1 \oplus O) = \conv\{W(B_1) \cup W(O)\} = \{r b: b \in W(B_1), r \in [0,1]\}$.
By the convexity of $W(A)$ and the fact that $0 \in W(A)$, if
$a \in W(A)$ and $r \in [0,1]$, then $ra \in W(A)$.
Therefore, $\lambda \in W(A)W(B_1\oplus O)$ implies that
$\lambda = a(rb)$ with $a \in W(A)$, $b \in W(B_1) \subseteq W(B)$, $r \in [0,1]$.
It follows that  $\lambda = (ra)b \in W(A)W(B) = W(\hat A)W(B)$.
\end{proof}

\medskip\noindent
%{\bf Acknowledgment}

\section*{acknowledgment}
Li is an honorary professor of the University of Hong Kong and Shanghai University.
His research was  supported by US NSF and HK RCG. This project was done while he was visiting the National Sun Yat-sen University, the Hong Kong Polytechnic University in January and February of 2014. He would like to thank the colleagues of these universities for their warm hospitality.

The Research was supported by the Ministry of Science and Technology of the Republic
of China under the projects NSC 102-2115-M-009-006 (for  Wang) and 102-2115-M-110-002-MY2
(for  Tsai and Wong).

We thank Che-Man Cheng for sending us the preprint \cite{Cet}.

\end{document}